\numberwithin{equation}{section}
\newcommand{\qtq}[1]{\quad\text{#1}\quad}
\newcommand{\R}{\mathbb{R}}
\newcommand{\C}{\mathbb{C}}
\newcommand{\eps}{\varepsilon}
\newcommand{\F}{\mathcal{F}}
\newtheorem{theorem}{Theorem}[section]
\newtheorem{proposition}[theorem]{Proposition}
\theoremstyle{definition}
\theoremstyle{remark}
\begin{document}
\title[Averaging]{A note on averaging for \\ the dispersion-managed NLS}
\author{Jason Murphy}
\address{Department of Mathematics, University of Oregon}
\email{jamu@uoregon.edu}

\begin{abstract} We discuss averaging for dispersion-managed nonlinear Schr\"odinger equations in the fast dispersion management regime, with an application to the problem of constructing soliton-like solutions to dispersion-managed nonlinear Schr\"odinger equations. 
\end{abstract}

\maketitle

\section{Introduction}

Our interests in this note are in averaging phenomena and soliton-type solutions for dispersion-managed nonlinear Schr\"odinger equations. Here `dispersion-managed' refers to the presence of a time-periodic factor in the linear part of the equation.  These equations arise in the setting of nonlinear optics, e.g. in the setting of laser light propagating down a fiber optics cable in which the dispersion varies periodically. The basic idea is that by varying the dispersion periodically in such a way that the average dispersion is small, one can suppress the undesired effects of dispersion on signal propagation (e.g. pulse broadening).  In particular, dispersion management is meant to have a stabilizing effect on pulse propagation. See e.g. \cite{ZGJT}, as well as \cite{TuritsynBF} for an extensive review.

To fix ideas, we will restrict our attention to the focusing cubic equation in $3d$ with positive average dispersion, i.e. 
\begin{equation}\label{nls}
i\partial_t u + \gamma(t)\Delta u = - |u|^2 u,\quad (t,x)\in\R\times\R^3,
\end{equation}
with $\gamma:\R\to\R$ a $1$-periodic function satisfying
\[
\int_0^1 \gamma(t)\,dt =1. 
\]
In the setting of nonlinear optics, a typical example is that of a piecewise constant function $\gamma$ that varies periodically.

There has been significant recent mathematical interest in dispersion-managed nonlinear Schr\"odinger equations.  For the physical background of such equations, as well as well-posedness results and related topics, we refer the reader to \cite{AntonelliSautSparber, CHoL, CHL, CL, EHL, GT1, GT2, GMO, HL, HL2, MV, MVH, PZ, ZGJT, Agrawal, Kurtzke} (and remark that this list is far from exhaustive). 

To the best of the author's knowledge, the question of the existence of solitons for the dispersion-managed equation in the form \eqref{nls} still seems to be open.  On the other hand, there are a wealth of results concerning solitary wave solutions for closely-related dispersion-managed models (e.g. averaged models or other approximate models). Many results involve the study of solitons for a related averaged equation arising in the \emph{strong dispersion management regime}.  In this setting, one considers \eqref{nls} with dispersion maps of the form $\tfrac{1}{\eps}\gamma(\tfrac{t}{\eps})$ and takes the limit as $\eps\to 0$.  In this case, one arrives at a limiting equation in which the time dependence is removed from the linear part of the equation and the nonlinearity is replaced with a nonlocal version, namely
\[
i\partial_t u = -\langle \gamma\rangle\Delta u -\int_0^1 e^{-i\tau\Delta}\bigl[|e^{i\tau\Delta}u|^2 e^{i\tau\Delta}u\bigr]\,d\tau,\quad \langle\gamma\rangle=\int_0^1 \gamma(t)\,dt. 
\]
For more details, the reader may consult references such as \cite{CL, Lushnikov, GT1, EHL, HL, MV, PZ, ZGJT, TuritsynBF, HL2, CLL, CHL}.  In particular, we refer the reader to \cite{TuritsynBF} for an extensive review article on the topic of dispersion-managed solitons. For results concerning the case of \emph{zero} average dispersion (which we will not consider here), one can refer to works such as \cite{AntonelliSautSparber, HL2}. We would also like to highlight the work of Pelinovsky \cite{Pel}, which studies the problem using a Gaussian ansatz, as an example of a particularly interesting result in this area.

It is also possible to obtain the standard cubic NLS (i.e. \eqref{nls} with $\gamma\equiv 1$) as an averaged version of \eqref{nls} by considering the so-called \emph{fast dispersion management regime}.  This entails considering the solutions to the equations
\begin{equation}\label{nls-eps}
i\partial_t u + \gamma(\tfrac{t}{\eps})\Delta u = -|u|^2 u
\end{equation}
and taking the limit as $\eps\to 0$ (see e.g. \cite{AntonelliSautSparber, BK1, BK2, YKT, CMVH}).  In this case, the existence of solitons for the averaged equation is well-known (with the specific combination of positive average dispersion and focusing nonlinearity).  In particular, proving convergence for solutions to \eqref{nls-eps} as $\eps\to 0$ could provide an approach to constructing soliton-like solutions to dispersion-managed nonlinear Schr\"odinger equations.  This is the basic idea considered in this paper.

In \cite{CMVH}, we proved some averaging results for the cubic equation in $2d$, which is an $L^2$-critical problem.  Essentially, we proved that we can obtain convergence on any time interval on which the solution to the underlying equation obeys suitable space-time bounds.  In particular, in the defocusing case, one can obtain global-in-time averaging due to the result of \cite{Dodson}.  In fact, we proved two results in \cite{CMVH}.  The first was a subcritical result (inspired by the paper \cite{AntonelliSautSparber}), treating initial data belonging to $H^s$ for some $s>0$.  In this scenario, we could spend a bit of regularity to obtain quantitative (in $\eps$) estimates for the difference of the linear propagators associated to \eqref{nls} and \eqref{nls-eps}. The second result treated $L^2$ data (for a more restrictive class of dispersion maps), utilizing a change of variables from \cite{Fanelli} and adapting techniques from the work \cite{Ntekoume} (on spatial homogenization for the $2d$ cubic NLS).

As soliton solutions do not obey global space-time bounds, the techniques presented in \cite{CMVH} only yield convergence for \eqref{nls-eps} on fixed time intervals.  In this note, we adapt the techniques of \cite{CMVH} (specifically, those used for the subcritical result) to the $3d$ cubic equation and slightly refine the argument in order to obtain convergence on a longer (although still finite) time interval in the case of soliton data.  Before stating our main result, we introduce some notation and terminology. 

First, we denote by $Q$ the ground state soliton for the cubic NLS, i.e. the unique radial, nonnegative, and decaying solution to the equation
\[
-Q+\Delta Q = -Q^3
\]
(see e.g. \cite{Weinstein}).

Next, we introduce the notion of an admissible dispersion map (cf. \cite{MVH}). We call $\gamma:\R\to\R$ \emph{admissible} if $\gamma$ is $1$-periodic, $\gamma$ and $\tfrac{1}{\gamma}$ both belong to  $L^\infty$, and $\gamma$ has at most finitely many discontinuities in $[0,1]$.  We will discuss local well-posedness for \eqref{nls} and \eqref{nls-eps} with admissible dispersion maps in Section~\ref{S:2} below. 

Finally, given a time interval $I\subset\R$, we introduce the Strichartz spaces $S^s(I)$ via the norm
\[
\|u\|_{S^{s}(I)}=\| u\|_{L_t^\infty H_x^{s}(I\times\R^3)} + \|u\|_{L_t^{\frac{10}{3}} H_x^{s,\frac{10}{3}}(I\times\R^3)}. 
\]

Our main result is the following:
\begin{theorem}\label{T} Let $\gamma$ be an admissible dispersion map with $\int_0^1 \gamma\,dt = 1$. Given $\eps>0$, let $u^\eps$ denote the solution to \eqref{nls-eps} with $u^\eps|_{t=0}=Q$. 

There exists $a,b>0$ such that for $\eps>0$ sufficiently small, the solution $u^{\eps}$ exists on $I_\eps:=[-\log(\eps^{-a}),\log(\eps^{-a})]$ and obeys
\[
\|u^\eps(t) - e^{it}Q\|_{S^{\frac12}(I_\eps)}\lesssim \eps^b. 
\]
\end{theorem}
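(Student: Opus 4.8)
The plan is to compare $u^\eps$ with the exact soliton solution $v(t):=e^{it}Q$ of the averaged (standard) cubic NLS via a perturbative Duhamel analysis, transferring estimates from the free Schr\"odinger propagator to the dispersion-managed one by a change of variables in time. Write $B_\eps(t):=\int_0^t \gamma(s/\eps)\,ds$, so that the linear dispersion-managed flow is $e^{iB_\eps(t)\Delta}$. Since $\int_0^1\gamma=1$, one has $B_\eps(t)=t+\eps P(t/\eps)$ with $P$ bounded and $1$-periodic, whence $\sup_t|B_\eps(t)-t|\lesssim\eps$; since $\gamma$ is admissible, $B_\eps$ is bi-Lipschitz (as $\gamma'\geq c>0$ a.e.), which allows the Strichartz estimates for $e^{it\Delta}$ to be transferred to $e^{iB_\eps(t)\Delta}$ together with the local theory, this being the content of Section~\ref{S:2}.

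The first key estimate is the propagator comparison. Writing $e^{iB_\eps(t)\Delta}-e^{it\Delta}=e^{it\Delta}\bigl(e^{i(B_\eps(t)-t)\Delta}-1\bigr)$ and using $\|(e^{i\sigma\Delta}-1)f\|_{H^{1/2}}\lesssim|\sigma|^\theta\|f\|_{H^{1/2+2\theta}}$ together with $|B_\eps(t)-t|\lesssim\eps$, I obtain a gain of $\eps^\theta$ at the cost of $2\theta$ derivatives. This derivative loss is harmless because it is only ever applied to the smooth, exponentially decaying data $Q$ and to the inhomogeneity $|v|^2v=e^{it}Q^3$, both of which lie in $H^s$ for every $s$.

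Subtracting the Duhamel formulas for $u^\eps$ and $v$ and setting $w:=u^\eps-v$ (note $w(0)=0$, since both data equal $Q$), I split the resulting integrand into a nonlinear-difference term $e^{i(B_\eps(t)-B_\eps(s))\Delta}\bigl(|u^\eps|^2u^\eps-|v|^2v\bigr)$ and a propagator-mismatch term $\bigl(e^{i(B_\eps(t)-B_\eps(s))\Delta}-e^{i(t-s)\Delta}\bigr)|v|^2v$. Applying the transferred Strichartz estimates, the first term is controlled by $(\|u^\eps\|_{S^{1/2}}^2+\|v\|_{S^{1/2}}^2)\|w\|_{S^{1/2}}$, while the second contributes a forcing of size $\lesssim\eps^\theta$ per unit-length subinterval by the comparison estimate above, since the effective shift $(B_\eps(t)-t)-(B_\eps(s)-s)$ is again $O(\eps)$.

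The main obstacle, and the reason the interval is only logarithmically long, is that $\|v\|_{S^{1/2}(I)}$ grows with $|I|$, so the nonlinear feedback cannot be absorbed globally. I will instead partition $I_\eps$ into $N\sim|I_\eps|$ subintervals, each further divided into a bounded number of pieces on which $\|v\|_{S^{1/2}}$ is small enough to absorb the nonlinear term into the left-hand side; a continuity argument simultaneously bootstraps $\|u^\eps\|_{S^{1/2}}\approx\|v\|_{S^{1/2}}$ on each piece, which keeps $u^\eps$ from blowing up and hence extends it to all of $I_\eps$. On each subinterval this yields $\|w(t_{j+1})\|_{H^{1/2}}\leq C\|w(t_j)\|_{H^{1/2}}+C\eps^\theta$, and iterating from $w(0)=0$ gives $\|w\|_{S^{1/2}(I_\eps)}\lesssim C^N\eps^\theta$. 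Since $N\sim a\log(\eps^{-1})$ forces $C^N\sim\eps^{-a\log C}$, choosing $a$ small relative to $\theta$ makes the total error $\eps^{\theta-a\log C}=\eps^b$ with $b>0$, completing the proof.
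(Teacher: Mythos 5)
Your proposal follows essentially the same route as the paper: the same Duhamel decomposition of $u^\eps - e^{it}Q$ into a nonlinear-difference term and propagator-mismatch terms, the same mechanism of trading $O(\eps)$ phase proximity for derivatives (harmless because the loss lands only on the smooth, decaying soliton terms), and the same iteration over $\sim\log(\eps^{-1})$ unit intervals in which the constant grows like $C^N\sim\eps^{-a\log C}$ and is beaten by choosing $a$ small relative to the $\eps$-gain. This matches the paper's a priori estimate \eqref{apriori} and its induction on the constants $A_j$.

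One step in your justification of the tools, however, is genuinely wrong. You claim that admissibility makes $B_\eps(t)=\int_0^t\gamma(s/\eps)\,ds$ bi-Lipschitz ``as $\gamma'\geq c>0$ a.e.'' (presumably you mean $\gamma\geq c>0$), and you use this to transfer Strichartz estimates from $e^{it\Delta}$ by a change of time variable. Admissibility only gives $\gamma,\tfrac1\gamma\in L^\infty$, i.e.\ $|\gamma|\geq c>0$ a.e.; the map $\gamma$ is allowed to change sign, and indeed sign-changing profiles are the physically central case in dispersion management. For such $\gamma$ the function $B_\eps$ is not monotone, $\Gamma_\eps(t,s)$ can vanish for $t\neq s$, and the change-of-variables transfer fails outright. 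The uniform-in-$\eps$ Strichartz estimates \eqref{uniform} and the propagator-convergence bounds \eqref{cop1} are true for all admissible maps, but their proof is more delicate (piecewise monotonicity plus the Christ--Kiselev lemma, which is also why the $L_t^2$ endpoint is omitted); the paper simply cites them from \cite{MVH, CMVH} as Theorem~\ref{Strichartz}. If you replace your bi-Lipschitz argument with an appeal to that theorem, the rest of your argument goes through and coincides with the paper's proof.
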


The strategy of proof is based on the prior work \cite{CMVH}, which in turn built on ideas from \cite{AntonelliSautSparber}. In particular, we let $u^\eps$ and $u$ denote the solutions to \eqref{nls-eps} and \eqref{nls}, respectively, both with initial data given by $Q$, the standard NLS ground state. Using the Duhamel formula, we decompose the difference
\[
u^\eps(t)-u(t) 
\]
into two types of terms.  In the first type of term, we can exhibit at least one copy of $u^\eps-u$.  In particular, these terms can be incorporated into a bootstrap estimate on sufficiently small intervals.  In the second type of term, we can exhibit the difference of propagators 
\begin{equation}\label{diff}
e^{i\Gamma_\eps(t,s)\Delta}-e^{i(t-s)\Delta}
\end{equation}
(see \eqref{D:Gamma} for the definition of $\Gamma_\eps$). We rely on quantitative Strichartz estimates for \eqref{diff} (as in \cite{CMVH, AntonelliSautSparber}) to prove that such terms are $\mathcal{O}(\eps^c)$ for some $c>0$ (see Theorem~\ref{Strichartz} below). In particular, we can iterate over $\approx |\log \eps|$ small intervals and thereby obtain Theorem~\ref{T}.

Theorem~\ref{T} demonstrates the existence of soliton-like solutions to dispersion-managed nonlinear Schr\"odinger equations on long time intervals.  We note, however, that the techniques presented here do not seem likely to establish any type of global-in-time result. Indeed, the basic \emph{a priori} estimate that plays a key role in the proof of Theorem~\ref{T} (see \eqref{apriori} below) is only useful on small time intervals.  Refinements of this approach will be considered in future work.

We also remark that the result as presented here does not depend on the fact that $Q$ is the ground state soliton.  For example, the techniques here may be applied to establish averaging results based around any traveling wave solution to \eqref{nls}.

\subsection*{Acknowledgements} The author was supported in part by NSF grant DMS-2350225.
\section{Preliminaries}\label{S:2}

We use the standard notation $A\lesssim B$ to denote $A\leq CB$ for some $C>0$.  We make regular use of the Strichartz norms
\begin{equation}\label{S-norm}
\|u\|_{S^{s}(I)}=\| u\|_{L_t^\infty H_x^{s}(I\times\R^3)} + \|u\|_{L_t^{\frac{10}{3}} H_x^{s,\frac{10}{3}}(I\times\R^3)}. 
\end{equation}
Here we use the notation
\[
\|u\|_{H_x^{s,r}(\R^3)} = \|u\|_{L_x^r(\R^3)} + \| |\nabla|^s u\|_{L_x^r(\R^3)}. 
\]
The fractional derivative $|\nabla|^s$ is defined as a Fourier multiplier operator: $|\nabla|^s = \F^{-1}|\xi|^s \F$.  We also use the notation $\langle \nabla\rangle^s = \F^{-1}(1+|\xi|^2)^{s/2}\F.$ 

Throughout this section, we fix an admissible dispersion map $\gamma$ satisfying 
\[
\int_0^1 \gamma(t) \,dt = 1.
\]
Here \emph{admissibility} is defined as in \cite{MVH}: specifically, we require that $\gamma$ is $1$-periodic, that $\gamma$ and $\tfrac{1}{\gamma}$ both belong to  $L^\infty$, and that $\gamma$ has at most finitely many discontinuities in $[0,1]$.

Given $\eps>0$, the solution to the linear equation 
\begin{equation}\label{linear-eps}
\begin{cases}
&i\partial_t u + \gamma(\tfrac{t}{\eps})\Delta u=0, \\
&u(t,t_0)= \varphi
\end{cases}
\end{equation}
is given by
\[
u(t)= e^{i\Gamma_\eps(t,t_0)\Delta}\varphi,
\]
where
\begin{equation}\label{D:Gamma}
\Gamma_\eps(t,t_0):=\int_{t_0}^t \gamma(\tfrac{\tau}{\eps})\,d\tau. 
\end{equation}

We will need some estimates from \cite{MVH, CMVH} (see also \cite{AntonelliSautSparber}).  The first estimate yields Strichartz estimates for \eqref{linear-eps} that hold uniformly in $\eps$.  The latter two estimates establish convergence of the propagators $e^{i\Gamma_\eps(t,t_0)\Delta}$ to $e^{i(t-t_0)\Delta}$ as $\eps\to 0$, which in turn relies on the basic but essential fact that 
\begin{equation}\label{Gamma_est}
|\Gamma_\eps(t,t_0)-(t-t_0)|\lesssim\eps
\end{equation}
(see \cite[Lemma~2.1]{CMVH}).  In the present paper, we have specialized to the case that $\langle\gamma\rangle:=\int_0^1\gamma = 1$.

We call $(q,r)$ a Schr\"odinger admissible pair (in three space dimensions) if $2<q\leq\infty$ and $\tfrac{2}{q}+\frac{3}{r}=\tfrac{3}{2}$.  We omit the $L_t^2$ endpoint due to the use of the Christ--Kiselev lemma in \cite{CMVH}.

\begin{theorem}[Strichartz estimates; convergence of propagators\cite{MVH, CMVH}]\label{Strichartz}

Given an admissible pair $(q,r)$, we have the uniform Strichartz estimate
\begin{equation}\label{uniform}
\|e^{i\Gamma_\eps(t,t_0)\Delta}\|_{L^2\to L_t^q L_x^r}\lesssim_\gamma 1 \qtq{uniformly in}\eps>0.
\end{equation}
Furthermore, 
\begin{equation}\label{cop1}
\|e^{i\Gamma_\eps(\cdot,t_0)\Delta}-e^{i(\cdot-t_0)\Delta}\|_{H^\theta\to L_t^q L_x^r} \lesssim_\gamma \eps^{(1-\frac{2}{q})\frac{\theta}{2}},
\end{equation}
and if $(\tilde q,\tilde r)$ is any other Schr\"odinger admissible pair,
\[
\biggl\|\int_{t_0}^t [e^{i\Gamma_\eps(t,s)\Delta}-e^{i(t-s)\Delta}]F(s)\,ds\biggr\|_{L_t^q L_x^r} \lesssim_\gamma \eps^{(2-\frac{2}{q}-\frac{2}{\tilde q})\frac{\theta}{2}}\|\langle \nabla\rangle^{2\theta}F\|_{L_t^{\tilde q'}L_x^{\tilde r'}},
\]
where $'$ denotes the H\"older dual. 
\end{theorem}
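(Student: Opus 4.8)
The three estimates all rest on the elementary bound \eqref{Gamma_est}, namely $|\Gamma_\eps(t,t_0)-(t-t_0)|\lesssim\eps$, together with the group law $e^{ia\Delta}e^{ib\Delta}=e^{i(a+b)\Delta}$ for the free propagator. I would dispatch the uniform Strichartz estimate \eqref{uniform} first by a change of the time variable. Writing $T=\Gamma_\eps(t,t_0)$, so that $dT=\gamma(\tfrac{t}{\eps})\,dt$, one has $e^{i\Gamma_\eps(t,t_0)\Delta}\varphi=e^{iT\Delta}\varphi$, and since $\tfrac{1}{\gamma}\in L^\infty$ the reciprocal Jacobian is bounded; thus
\[
\|e^{i\Gamma_\eps(\cdot,t_0)\Delta}\varphi\|_{L_t^qL_x^r}^q=\int\|e^{iT\Delta}\varphi\|_{L_x^r}^q\,\tfrac{dT}{\gamma(t/\eps)}\lesssim_\gamma\|e^{iT\Delta}\varphi\|_{L_T^qL_x^r}^q\lesssim\|\varphi\|_{L^2}^q,
\]
the last step being the classical Strichartz estimate, and the constant being uniform in $\eps$. (When $\gamma$ changes sign one first splits $[0,1]$ into the finitely many intervals of monotonicity, which admissibility permits.)

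For the homogeneous convergence estimate \eqref{cop1}, I would use the group law to factor
\[
e^{i\Gamma_\eps(t,t_0)\Delta}-e^{i(t-t_0)\Delta}=\bigl[e^{i\delta_\eps(t)\Delta}-1\bigr]e^{i(t-t_0)\Delta},\qquad \delta_\eps(t):=\Gamma_\eps(t,t_0)-(t-t_0),
\]
where $|\delta_\eps(t)|\lesssim\eps$ by \eqref{Gamma_est}. The symbol of the bracket obeys the pointwise bound $|e^{i\delta_\eps(t)|\xi|^2}-1|\lesssim\min\{1,\eps|\xi|^2\}$. Localizing to frequencies $|\xi|\sim N$ and using that $e^{i(t-t_0)\Delta}$ is an $L^2$ isometry gives the bound $\min\{1,\eps N^2\}\|P_Nf\|_{L^2}$ in $L_t^\infty L_x^2$, while \eqref{uniform} gives $\|P_Nf\|_{L^2}$ in $L_t^2L_x^6$ with no gain. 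Interpolating these two space-time endpoints produces $\min\{1,\eps N^2\}^{1-\frac2q}\|P_Nf\|_{L^2}$ in $L_t^qL_x^r$, and summing over $N$ via Littlewood--Paley together with the elementary inequality $\min\{1,\eps N^2\}^{1-\frac2q}\lesssim\eps^{(1-\frac2q)\frac\theta2}\langle N\rangle^{\theta}$ yields \eqref{cop1}.

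The inhomogeneous estimate is the crux, and the key point is that one must keep the difference kernel $\Delta_\eps(t,s):=e^{i\Gamma_\eps(t,s)\Delta}-e^{i(t-s)\Delta}$ intact: telescoping it into two homogeneous differences (e.g.\ $\Delta_\eps(t,s)=[e^{i\Gamma_\eps(t,t_0)\Delta}-e^{i(t-t_0)\Delta}]e^{-i(s-t_0)\Delta}+e^{i\Gamma_\eps(t,t_0)\Delta}[e^{-i\Gamma_\eps(s,t_0)\Delta}-e^{-i(s-t_0)\Delta}]$) destroys the cancellation responsible for the gain, since the term carrying $e^{i\Gamma_\eps\Delta}$ on the output admits no smallness. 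Instead I would record two bounds for $\Delta_\eps(t,s)\langle\nabla\rangle^{-2\theta}$: an $L^2\to L^2$ bound of size $\eps^{\theta}$, which is where the cancellation enters through $|\Gamma_\eps(t,s)-(t-s)|\lesssim\eps$ and the symbol estimate $\min\{1,\eps|\xi|^2\}\langle\xi\rangle^{-2\theta}\lesssim\eps^\theta$; and an $L^1\to L^\infty$ bound of size $|t-s|^{-3/2}$, coming from the free dispersive estimate (bounding the difference by the sum, with no gain). Interpolating these and feeding the result into the inhomogeneous Strichartz / Hardy--Littlewood--Sobolev machinery distributes the $\eps$-gain across the output pair $(q,r)$ and the input pair $(\tilde q,\tilde r)$, producing the exponent $(2-\tfrac2q-\tfrac2{\tilde q})\tfrac\theta2$ and the cost $\langle\nabla\rangle^{2\theta}$. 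Finally, the Christ--Kiselev lemma upgrades the resulting non-time-ordered bound to the retarded operator $\int_{t_0}^t$; this step requires $\tilde q'<q$ and degenerates at the shared $L_t^2$ endpoint, which is precisely why that endpoint is omitted. I expect this last estimate to be the main obstacle, both because one must preserve the cancellation at the kernel level rather than split it away, and because of the bookkeeping in the non-diagonal inhomogeneous Strichartz step.
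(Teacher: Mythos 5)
Your proposal cannot be checked against an in-paper argument, because the paper does not prove Theorem~\ref{Strichartz}: it is imported wholesale from \cite{MVH} (the uniform estimate \eqref{uniform}) and \cite{CMVH} (the convergence estimates), with the paper only recording the key input \eqref{Gamma_est} and the remark that the $L_t^2$ endpoint is dropped because of the Christ--Kiselev lemma in \cite{CMVH} --- a remark your final step matches exactly. Your sketches of \eqref{uniform} and \eqref{cop1} are essentially the cited arguments: the change of variables $T=\Gamma_\eps(t,t_0)$ using $\tfrac1\gamma\in L^\infty$, and the factorization through $e^{i\delta_\eps(t)\Delta}-1$ with symbol bound $\min\{1,\eps|\xi|^2\}$, interpolated against a no-gain anchor (note the anchor must be the $L_t^2L_x^6$ endpoint to produce the exact exponent $(1-\frac2q)\frac\theta2$; it is available for $e^{i\Gamma_\eps\Delta}$ via the same change of variables even though the theorem statement excludes $q=2$). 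One caveat on \eqref{uniform}: your parenthetical fix for sign-changing $\gamma$ understates the real issue. On $[t_0,t]$ the map $\tau\mapsto\Gamma_\eps(\tau,t_0)$ has on the order of $(t-t_0)/\eps$ monotone pieces, not finitely many independent of $\eps$, and after changing variables on each piece one must show the images $\Gamma_\eps(I_j)$ have \emph{bounded overlap}; this uses $\int_0^1\gamma=1>0$ (drift $\eps$ per period against oscillation $O(\eps)$ per period, so each point of the $T$-axis is covered $\lesssim_\gamma 1$ times). Without that multiplicity count the constant degenerates as $\eps\to0$; this count is where the content of the hypothesis ``$\tfrac1\gamma\in L^\infty$ with finitely many discontinuities per period'' actually enters.

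The genuine gap is in the inhomogeneous estimate. The asserted $L^1\to L^\infty$ bound $\|\Delta_\eps(t,s)\|_{L^1\to L^\infty}\lesssim|t-s|^{-3/2}$ is false for sign-changing admissible $\gamma$, which is the physically relevant case: take e.g. $\gamma=3$ on $[0,\tfrac12)$ and $\gamma=-1$ on $[\tfrac12,1)$ (admissible, mean one); then there exist $t\neq s$ with $|t-s|\sim\eps$ and $\Gamma_\eps(t,s)=0$, where $e^{i\Gamma_\eps(t,s)\Delta}$ is the identity and has no dispersive decay whatsoever, so the difference kernel is not bounded from $L^1$ to $L^\infty$ there. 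From \eqref{Gamma_est} one only gets $|\Gamma_\eps(t,s)|\gtrsim|t-s|$ when $|t-s|\gtrsim\eps$, so the diagonal strip $\{|t-s|\lesssim\eps\}$ must be excised and treated separately (e.g. with your $L^2$ bound plus H\"older in time, gaining a power of $\eps$ from the strip width); as written your interpolation-plus-HLS scheme does not go through. Separately, your stated reason for discarding the telescoped decomposition is not correct: in the second telescoped term the smallness is carried by the \emph{input} difference via the dual of \eqref{cop1}, while $e^{i\Gamma_\eps\Delta}$ on the output is harmless by \eqref{uniform}. The true defect of telescoping is quantitative, not structural: with cost $\langle\nabla\rangle^{2\theta}$ it yields the exponent $\min\{1-\tfrac2q,\,1-\tfrac2{\tilde q}\}\,\theta$, which agrees with the claimed $(2-\tfrac2q-\tfrac2{\tilde q})\tfrac\theta2$ only when $q=\tilde q$ and undershoots it for asymmetric pairs (e.g. $q=\infty$, $\tilde q$ near $2$, where the claim gives $\eps^{\theta/2}$ and telescoping gives nothing). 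So your instincts --- keep the cancellation at the kernel level, finish with Christ--Kiselev, expect this estimate to be the crux --- are all sound, but the two quantitative pillars you lean on (the diagonal dispersive bound, and the dismissal of telescoping) each need repair before the exponent bookkeeping can be trusted.
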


We next record a local well-posedness result for \eqref{nls-eps}. We construct solutions to the Duhamel formula
\begin{equation}\label{duh}
u(t)=e^{i\Gamma_\eps(t,0)\Delta}\varphi+i\int_0^t e^{i\Gamma_\eps(t,s)\Delta}|u|^2 u(s)\,ds. 
\end{equation}
We remark that in the following proposition, the interval of existence depends on the initial condition but \emph{not} on $\eps$.  This stems from the fact that the Strichartz estimates appearing in \eqref{uniform} hold uniformly in $\eps$.

\begin{proposition}[Local well-posedness]\label{P:LWP} Fix $s\in[\frac35,1]$ and an admissible dispersion map $\gamma$. Let $\varphi\in H^s(\R^3)$ and $\eps>0$.  Then there exists $T=T(\|\varphi\|_{H^s})$ and a solution $u^\eps:(-T,T)\times\R^3\to\C$ to \eqref{nls-eps} with $u^\eps|_{t=0}=\varphi$.  The solution belongs to $S^s((-T,T))$ and may be extended as long as its $S^{\frac12}$-norm remains finite.
\end{proposition}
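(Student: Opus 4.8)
The plan is to construct the solution by a standard contraction-mapping argument applied to the Duhamel map associated with \eqref{duh}, and then to upgrade the existence time to a continuation criterion governed by the scaling-critical norm $S^{\frac12}$ (note that the critical regularity for cubic NLS in $3d$ is $s_c=\frac12$). The one ingredient that makes the interval of existence independent of $\eps$ is the \emph{uniform} Strichartz estimate \eqref{uniform} of Theorem~\ref{Strichartz}; with it in hand, the construction is insensitive to the value of $\eps$ and proceeds exactly as for the constant-coefficient cubic NLS.

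First I would fix $\varphi\in H^s$ and define, on $I=(-T,T)$, the map
\[
\Phi(u)(t)=e^{i\Gamma_\eps(t,0)\Delta}\varphi+i\int_0^t e^{i\Gamma_\eps(t,s)\Delta}|u|^2u(s)\,ds,
\]
and show it contracts on the ball $\{u:\|u\|_{S^s(I)}\le 2C\|\varphi\|_{H^s}\}$, where $C$ is the constant from \eqref{uniform}. Applying \eqref{uniform} to $\langle\nabla\rangle^s u$ (together with its inhomogeneous Christ--Kiselev version) bounds the linear term by $C\|\varphi\|_{H^s}$ and reduces control of the Duhamel term to a cubic estimate of the schematic form
\[
\bigl\|\langle\nabla\rangle^s(|u|^2u)\bigr\|_{L_t^{\tilde q'}L_x^{\tilde r'}(I)}\lesssim T^{\delta}\,\|u\|_{S^s(I)}^3,\qquad \delta=\delta(s)>0,
\]
supplemented by the matching Lipschitz bound coming from $\bigl||u|^2u-|v|^2v\bigr|\lesssim(|u|^2+|v|^2)|u-v|$. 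For $T$ small, depending only on $\|\varphi\|_{H^s}$, this makes $\Phi$ a contraction and produces a unique solution in $S^s(I)$.

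The heart of the matter, and the step I expect to be the main obstacle, is the cubic estimate above. I would prove it with the fractional Leibniz rule, placing $\langle\nabla\rangle^s$ on one factor measured in a Strichartz space $L_t^{q_0}L_x^{\rho_0}$ and estimating the remaining two factors in $L_x^{r_i}$ via the Sobolev embedding $H^{s,\rho_0}\hookrightarrow L^{r_i}$; Hölder in time on the finite interval $I$ then yields the gain $T^{\delta}$ with $\delta=s-\tfrac12>0$. The bookkeeping is delicate: because the $L_t^2$ endpoint is excluded in Theorem~\ref{Strichartz}, every Strichartz pair that appears must be admissible with $q>2$, and simultaneously the Sobolev embeddings must be valid; reconciling these constraints while keeping $\delta>0$ is precisely what confines the argument to the range $s\in[\tfrac35,1]$. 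At the critical value $s=\tfrac12$ the power $\delta$ degenerates to $0$, consistent with the absence of any small-time gain there.

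Finally, for the continuation criterion I would re-run the same cubic estimate, again with the derivative on one factor, but now controlling the other two factors by the \emph{critical} norm through $H^{\frac12}\hookrightarrow L^{r_i}$, which yields a bound of the form $\|u\|_{S^s(J)}\lesssim\|u(t_J)\|_{H^s}+\|u\|_{S^{\frac12}(J)}^2\,\|u\|_{S^s(J)}$ on a subinterval $J$. If $\|u\|_{S^{\frac12}}$ were finite up to a putative maximal time $T^*$, one could partition $[0,T^*)$ into finitely many subintervals on which $\|u\|_{S^{\frac12}}$ is small, absorb the cubic term, and chain the resulting bounds to conclude that $\sup_{t<T^*}\|u(t)\|_{H^s}<\infty$. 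The local theory can then be restarted past $T^*$, contradicting maximality; hence the solution extends as long as its $S^{\frac12}$-norm remains finite. Uniqueness throughout follows from the same Lipschitz estimate used in the contraction.
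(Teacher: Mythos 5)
Your outline follows the same route as the paper: a contraction argument for the Duhamel map \eqref{duh} built on the $\eps$-uniform Strichartz estimates \eqref{uniform}, a cubic estimate gaining a power of $T$ via the fractional Leibniz rule, Sobolev embedding, and H\"older in time, and a continuation criterion proved by partitioning the interval and absorbing. But the continuation step contains a flaw as written. You propose to partition $[0,T^*)$ into finitely many subintervals on which $\|u\|_{S^{\frac12}}$ is small and then absorb the term $\|u\|_{S^{\frac12}(J)}^2\|u\|_{S^s(J)}$ into the left-hand side. This cannot work: the $S^{\frac12}$ norm contains the component $\|u\|_{L_t^\infty H_x^{\frac12}}$, and on any subinterval $J\ni t$ this piece is bounded below by $\|u(t)\|_{H^{\frac12}}$, a fixed positive quantity; no partition makes it small. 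The absorption must instead be driven by a norm that is \emph{time-integrable}, so that its restriction to short intervals is small. This is exactly how the paper proceeds: the two critical factors in the cubic estimate are placed in $L_t^{\frac{10}{3}}L_x^{\frac{15}{2}}$ and controlled, via the embedding $\dot H^{\frac12,\frac{10}{3}}(\R^3)\hookrightarrow L^{\frac{15}{2}}(\R^3)$, by $\|u\|_{L_t^{\frac{10}{3}}\dot H_x^{\frac12,\frac{10}{3}}(I_j)}$, which can be made less than $\eta$ on each piece of a finite partition precisely because $\|u\|_{L_t^{\frac{10}{3}} H_x^{\frac12,\frac{10}{3}}(I)}<\infty$ is part of the hypothesis \eqref{LWPif}. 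With that correction, your chaining argument goes through.

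A second, smaller issue: you claim $\Phi$ contracts on the ball $\{u:\|u\|_{S^s(I)}\le 2C\|\varphi\|_{H^s}\}$ using the pointwise bound $\bigl||u|^2u-|v|^2v\bigr|\lesssim(|u|^2+|v|^2)|u-v|$. That bound carries no derivatives, so it cannot yield contraction in the $S^s$ metric itself; estimating $\langle\nabla\rangle^s$ applied to a difference of nonlinearities is exactly where fractional chain rules break down. The standard fix, and the one the paper uses, is to contract in the weaker metric $d(u,v)=\|u-v\|_{L_{t,x}^{\frac{10}{3}}}$ on the $S^s$-ball (which is complete for this metric); your Lipschitz bound is then precisely what is needed, and uniqueness plus membership in $S^s$ follow. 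Finally, a cosmetic point: the range $s\in[\tfrac35,1]$ is not forced by the scheme, as you suggest --- the paper remarks that any subcritical $s>\tfrac12$ would do --- it is an artifact of the particular embedding $\dot H^{\frac35,\frac{10}{3}}(\R^3)\hookrightarrow L^{10}(\R^3)$ chosen to close the estimates.
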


\begin{proof}  Recall that the implicit constants in the Strichartz estimates in \eqref{uniform} are uniform in $\eps>0$. We will focus on showing existence forward in time only.  

The proof follows from the usual contraction mapping argument, i.e. showing that the map 
\[
u\mapsto\Phi(u):=\text{RHS}\eqref{duh}
\]
is a contraction on a suitable complete metric space. 

We let $T>0$ to be determined below and take our metric space to be
\[
X=\{u:\|u\|_{S^s([0,T])}\leq C\|\varphi\|_{H^s}\},
\]
where $C$ is related to the implicit constants in Strichartz estimates and Sobolev embedding, with distance given by
\[
d(u,v) = \|u-v\|_{L_{t,x}^{\frac{10}{3}}([0,T]\times\R^3)}.
\]

To see that $\Phi:X\to X$, we let $u\in X$ and apply Strichartz estimates.  Focusing on the contribution of the inhomogeneous terms, we use the fractional chain rule, H\"older's inequality, and Sobolev embedding to estimate
\begin{align*}
\||u|^2 u\|_{L_t^1 H_x^s} & \lesssim T^{\frac{1}{10}}\|u\|_{L_t^{\frac{10}{3}}L_x^{10}}^2 \|u\|_{L_t^{\frac{10}{3}}H_x^{s,\frac{10}{3}}} \\
& \lesssim T^{\frac{1}{10}} \| |\nabla|^{\frac35} u\|_{L_t^{\frac{10}{3}}L_x^{\frac{10}{3}}}^2 \|u\|_{L_t^{\frac{10}{3}}H_x^{s,\frac{10}{3}}}  \\
&\lesssim T^{\frac{1}{10}}[C\|\varphi\|_{H^s}]^3 \leq \tfrac12 C \|\varphi\|_{H^s}
\end{align*}
for $T=T(\|\varphi\|_{H^s})$ sufficiently small. Thus we may obtain that $\Phi:X\to X$. 

Choosing $u,v\in X$ and estimating similarly, we can obtain
\begin{align*}
d(\Phi(u),\Phi(v)) & \lesssim \bigl\{\|u\|_{L_t^{\frac{10}{3}} \dot H_x^{\frac35,\frac{10}{3}}}^2 + \|v\|_{L_t^{\frac{10}{3}}\dot H_x^{\frac35,\frac{10}{3}}}^2\bigr\}\|u-v\|_{L_{t,x}^{\frac{10}{3}}} \\
& \lesssim T^{\frac{1}{10}}[C\|\varphi\|_{H^s}]^2 \|u-v\|_{L_{t,x}^{\frac{10}{3}}} \\
& \leq \tfrac12 d(u,v)
\end{align*}
for $T=T(\|\varphi\|_{H^s})$ sufficiently small. 

It follows that $\Phi$ is a contraction on $X$ and hence has a unique fixed point, yielding our desired solution.

It remains to show that the solution may be continued as long as its $S^{\frac12}$-norm remains finite.  To see this, first note that the local existence result just proven guarantees that a solution may be extended as long as its $H^s$-norm remains finite; that is, if the solution cannot be extended past some time $T_*>0$, we must have that $\|u(t)\|_{H^s}\to\infty$ as $t\uparrow T^*$.  

Thus it suffices to prove that if a solution exists on some interval $I$ and satisfies
\begin{equation}\label{LWPif}
\|u\|_{S^{\frac12}(I)}<\infty,
\end{equation}
then 
\begin{equation}\label{LWPthen}
\|u\|_{S^s(I)}<\infty.
\end{equation}

Suppose \eqref{LWPif} holds.  We let $\eta>0$ to be determined below and split $I$ into finitely many intervals $I_j=[t_j,t_{j+1}]$ such that
\[
\|u\|_{L_t^{\frac{10}{3}} H_x^{\frac12,\frac{10}{3}}(I_j\times \R^3)}<\eta. 
\]
Restricting to an interval of the form $[t_j,t]$, we can estimate essentially as we did above to obtain 
\begin{align*}
\|u\|_{S^s([t_j,t])}&  \lesssim \|u(t_j)\|_{H^s} + \||u|^2 u\|_{L_t^{\frac{10}{9}}H_x^{s,\frac{30}{17}}} \\
& \lesssim \|u(t_j)\|_{H^s} + \|u\|_{L_t^{\frac{10}{3}}L_x^{\frac{15}{2}}}^2 \|u\|_{L_t^{\frac{10}{3}} H_x^{s,\frac{10}{3}}} \\
& \lesssim \|u(t_j)\|_{H^s} + \| u\|_{L_t^{\frac{10}{3}} \dot H_x^{\frac12,\frac{10}{3}}}^2 \|u\|_{S^s([t_j,t])} \\
& \lesssim \|u(t_j)\|_{H^s} + \eta^2 \|u\|_{S^s([t_j,t])}.
\end{align*}
Thus, by a standard continuity argument, we may obtain that 
\[
\|u\|_{S^s(I_j)} \leq 2C\|u(t_j)\|_{H^s}.
\]
Iterating over the finite collection of intervals, we can obtain the desired conclusion \eqref{LWPthen}.\end{proof}

We remark once again that the time of existence in Proposition~\ref{P:LWP} depends on the initial data, but \emph{not} on $\eps$.  This is a consequence of the fact that the Strichartz estimates for $e^{i\Gamma_\eps(t,t_0)\Delta}$ are uniform in $\eps$. 

Note also that we have not optimized the preceding result in terms of the regularity of the data.  Indeed, the argument could be extended to any subcritical regularity (i.e. data in $H^s$ for $s>\tfrac12$).  

On the other hand, obtaining a critical result (i.e. working with data in $H^{\frac12}$) that is uniform in $\eps>0$ is a bit more subtle.  In this case, one would like to choose the existence time $T>0$ small enough that
\[
\|e^{i\Gamma_\eps(t,0)\Delta}\varphi\|_{L_t^{\frac{10}{3}} H_x^{\frac12,\frac{10}{3}}([0,T]\times\R^3)} \ll 1.
\]
For fixed $\eps>0$, this is indeed possible by the monotone convergence theorem, using the fact that
\[
\|e^{i\Gamma_\eps(t,0)\Delta}\varphi\|_{L_t^{\frac{10}{3}} H_x^{\frac12,\frac{10}{3}}(\R\times\R^3)} \lesssim \|\varphi\|_{H^{\frac12}}.
\]
However, even though the implicit constant in this Strichartz estimate is uniform in $\eps>0$, it is not clear that one can choose $T$ independent of $\eps>0$.  

One way to proceed is to require a bit of extra regularity on the data $\varphi$ and utilize \eqref{cop1}. In particular, if we take $\varphi\in H^{\frac12+\theta}$ and suppose (without loss of generality) that $\int_0^1 \gamma=1$, then for any $T>0$ we can estimate
\begin{align*}
\|e^{i\Gamma_\eps(t,0)\Delta}& \varphi\|_{L_t^{\frac{10}{3}} H_x^{\frac12,\frac{10}{3}}([0,T]\times\R^3)} \\
& \lesssim \|e^{it\Delta}\varphi\|_{L_t^{\frac{10}{3}}H_x^{\frac12,\frac{10}{3}}([0,T]\times\R^3)} + \eps^{\frac{\theta}{5}}\|\varphi\|_{H^{\frac12+\theta}}.
\end{align*} 
By choosing $T=T(\varphi)$ and $\eps>0$ sufficiently small, we can make this quantity arbitrarily small.  In this way, one could obtain a local theory using only `critical spaces' (albeit for data slightly more regular than $H^{\frac12}$) that holds uniformly for (small) $\eps>0$.

\section{Proof of the main result}

We turn to the proof of Theorem~\ref{T}.  We focus on proving estimates forward in time only.

\begin{proof}[Proof of Theorem~\ref{T}] We let $\gamma$ be an admissible dispersion map with 
\[
\int_0^1 \gamma(t)\,dt = 1,
\]
and define $\gamma_\eps$ and $\Gamma_\eps$ as in the previous section.

Given $\eps>0$, we apply Proposition~\ref{P:LWP} and let $u^\eps$ be the solution to \eqref{nls-eps} with $u^\eps|_{t=0}=Q$. We also let $u(t)=e^{it}Q$, which solves 
\[
i\partial_t u + \Delta u = -|u|^2 u
\]
and exists globally in time.  Our goal is to estimate the difference between the solutions $u^\eps$ and $u$.

By Proposition~\ref{P:LWP}, the solutions $u^\eps$ exist on some interval $[0,T]$ (for some $T$ independent of $\eps$) and may be continued as long as their $S^{\frac12}$-norms remain under control (recall the definition of this norm in \eqref{S-norm}). Thus, in what follows we will assume that the solutions $u^\eps$ exist and establish \emph{a priori} bounds on the difference between $u^\eps$ and $u$ in the $S^{\frac12}$-norm.  The implicit constants below will generally depend on the fixed dispersion map $\gamma$, but \emph{not} on $\eps$. 

Fix $t,t_0\in\R$ and denote $F(z)=|z|^2 z$. We begin by using the Duhamel formula to write
\begin{equation}\label{decomp}
\begin{aligned}
u^\eps(t)-u(t) & = e^{i\Gamma_\eps(t,t_0)\Delta}[u^\eps(t_0)-u(t_0)] \\
& \quad + [e^{i\Gamma_\eps(t,t_0)\Delta}-e^{i(t-t_0)\Delta}]u(t_0) \\
& \quad + i\int_{t_0}^t e^{i\Gamma_\eps(t,s)\Delta}[F(u^\eps(s))-F(u(s))]\,ds \\
& \quad + i\int_{t_0}^t [e^{i\Gamma_\eps(t,s)\Delta}-e^{i(t-s)\Delta}]F(u(s))\,ds. 
\end{aligned}
\end{equation}
Letting $I\ni t_0$, we can therefore use the estimates from the proof of Proposition~\ref{P:LWP} and Theorem~\ref{Strichartz} and obtain
\begin{align*}
\|&u^\eps-u\|_{S^{\frac12}} & \\
&\lesssim \|u^\eps(t_0)-u(t_0)\|_{H^{\frac12}}+\eps^c \|u\|_{L_t^\infty H_x^{s}} \\
& \quad + \|F(u^\eps)-F(u)\|_{L_t^{\frac{10}{9}}H_x^{\frac12,\frac{30}{17}}}+ \eps^c \|F(u)\|_{L_t^{\frac{10}{9}}H_x^{s,\frac{30}{17}}}
\end{align*}
for some $s\in(\tfrac12,1)$ and $c=c(s)>0$, where all norms are taken over $I\times\R^3$. 

We first observe that 
\[
\eps^c \|u\|_{L_t^\infty H_x^s} \lesssim_Q \eps^c.
\]

We next observe that $F(u^\eps)-F(u)$ may be written as a sum of terms of the form
\[
vw[u^\eps-u], \quad v,w\in\{u,u^\eps-u\}
\]
up to complex conjugation.  Thus, applying the fractional product rule, using the same spaces as in the proof of Proposition~\ref{P:LWP}, applying Young's inequality, and recalling that $u(t)=e^{it}Q$ with $Q$ smooth and rapidly decaying, we may obtain
\begin{align*}
\|F(u^\eps)-F(u)\|_{L_t^{\frac{10}{9}}H_x^{\frac12,\frac{30}{17}}} &\lesssim \|u\|_{L_t^{\frac{10}{3}}H_x^{\frac12,\frac{10}{3}}}^2\|u^\eps-u\|_{S^\frac12} + \|u^\eps-u\|_{S^{\frac12}}^3 \\
& \lesssim_Q |I|^{\frac35}\|u^\eps-u\|_{S^{\frac12}} + \|u^\eps-u\|_{S^{\frac12}}^3. 
\end{align*}

Finally, we have
\[
\eps^c \|F(u)\|_{L_t^{\frac{10}{9}} H_x^{s,\frac{30}{17}}} \lesssim_Q \eps^c |I|^{\frac{9}{10}}. 
\]

Combining the estimates above, we arrive at our basic \emph{a priori} estimate:
\begin{equation}\label{apriori}
\begin{aligned}
\|u^\eps-u\|_{S^{\frac12}(I)}& \leq C\bigl[ \|u^\eps(t_0)-u(t_0)\|_{H^{\frac12}}+\eps^c   +|I|^{\frac35}\|u^\eps-u\|_{S^{\frac12}(I)} \\
& \quad \quad + \|u^\eps-u\|_{S^{\frac12}(I)}^3 + \eps^c |I|^{\frac{9}{10}}\bigr].
\end{aligned}
\end{equation}

We will now proceed by using this estimate iteratively to propagate good bounds for $u^\eps-u$ over sufficiently small intervals.  To this end, we define 
\begin{equation}\label{D:Teps}
T_\eps=\log[\eps^{-a}]
\end{equation}
 for some $a\in(0,c)$. We split $[0,T_\eps]$ into $J\sim T_\eps$ intervals of the form 
 \[
 I_j=[t_j,t_{j+1}]
 \]
 so that $|I_j|\ll1$ for each $j$. 
 
 On any interval $I=[t_j,t]\subset I_j$ the \emph{a priori} estimate implies 
\begin{equation}\label{apriori}
\|u^\eps-u\|_{S^{\frac12}(I)} \leq 2C \big[\|u^\eps(t_0)-u(t_0)\|_{H^{\frac12}}+ 2\eps^c + \|u^\eps-u\|_{S^{\frac12}(I)}^3\bigr]. 
\end{equation}

We now define 
\[
A_0 = 8C \qtq{and} A_j = 4C[A_{j-1}+2] \qtq{for}1\leq j\leq J.
\]
One can verify by induction that 
\[
A_j\leq (16C)^{J+1} \qtq{for all}j,
\]
and in particular (recalling the definition of $T_\eps$) we have 
\begin{equation}\label{Ajbd}
A_j\eps^c\ll1
\end{equation}
for all $j$, provided $\eps$ is sufficiently small.

We will prove by induction that
\begin{equation}\label{induction}
\|u^\eps-u\|_{S(I_j)} \leq A_j \eps^c \qtq{for all}j.
\end{equation}

For $j=0$, we use the fact that $u^\eps(t_0)=u(t_0)=Q$, so that \eqref{apriori} implies 
\[
\|u^\eps-u\|_{S^{\frac12}(I)} \leq 4C\eps^c + 2C\|u^\eps - u\|_{S^{\frac12}(I)}^3
\]
for any $I=[0,t]\subset I_0$.  By a continuity argument, this implies
\[
\|u^\eps-u\|_{S^{\frac12}(I_0)}\leq 8C\eps^c
\]
provided $\eps$ is sufficiently small. This yields the base case. 

Now suppose that \eqref{induction} holds up to level $j-1$.  Then \eqref{apriori} implies
\begin{align*}
\|u^\eps-u\|_{S^{\frac12}(I)} & \leq 2CA_{j-1} \eps^c + 4C\eps^c + 2C\|u^\eps - u\|_{S^{\frac12}(I)}^3 \\
& \leq \tfrac12A_j\eps^c + 2C\|u^\eps-u\|_{S^{\frac12}(I)}^3. 
\end{align*}
for $I=[t_j,t]\subset I_j$.  As $A_j\eps^c\ll 1$ (cf. \eqref{Ajbd}), another continuity argument implies
\[
\|u^\eps-u\|_{S^{\frac12}(I_j)}\leq A_j\eps^c,
\]
thus completing the induction. Recalling the definition of $T_\eps$ in \eqref{D:Teps}, we can now obtain 
\[
\|u^\eps-u\|_{S^{\frac12}([0,T_\eps])}\lesssim A_J \eps^c \lesssim \eps^b
\]
for some $b>0$, thus completing the proof of Theorem~\ref{T}. 
\end{proof}

\end{document}